\documentclass[10pt,leqno]{amsart}
\usepackage{epsfig}
\usepackage{graphicx}
\usepackage{color}
\usepackage[utf8]{inputenc}
\usepackage{amssymb}
\usepackage{amsmath}
\usepackage{amsthm}
\usepackage{MnSymbol}
\usepackage{setspace}

\numberwithin{equation}{section}

\newcommand{\cH}{{\mathcal H}}

\newcommand{\cD}{{\mathcal D}}
\newcommand{\cF}{{\mathcal F}}

\newtheorem{theorem}{Theorem}[section]
\newtheorem{lemma}[theorem]{Lemma}
\newtheorem{corollary}[theorem]{Corollary}

\theoremstyle{definition}

\theoremstyle{remark}

\begin{document}
\title{A class of inequalities for intersection-closed set systems}
\author{Rainer Schrader}
\address{Dept. of Mathematics and Computer Science\\
	University of Cologne\\
	Sibille-Hartmann-Straße  8\\
	50969 Cologne, Germany}
\email{schrader@cs.uni-koeln.de}

\maketitle

\begin{abstract}
Let $N$ be a finite set and $\cF$, an intersection-closed family of subsets. Frankl conjectured that there always exists an element in $N$ which is contained in at most half the number of sets in $\cF$ unless $\cF =\{E\}$. We prove the validity of a class of inequalities which imply Frankl's conjecture.
\end{abstract}

\section{Introduction}
Given a family of subsets of a finite ground set with the property that for every two sets $A,B$ in the family also their intersection $A\cap B$ is a member of the family. The following conjecture
seems to have appeared in the mid-1970s and is attributed to Peter Frankl \cite{Frankl95}: there is a "rare'' element of the ground set which occurs in at most half the number of members of the family. Obviously, this is not true if the family consists only of the ground set but this should be the only exception.\smallskip

While easy to state, the conjecture has resisted numerous proof attempts. In the literature there is a wealth of publications and preprints dealing with reformulations in terms of union-closed set systems, lattices and graphs and proving the conjecture in various special cases. The interested read is referred to an excellent survey by Bruhn and Schaudt \cite{BruhnSchaudt1}. It shows the history of Frankl's conjecture and reviews the reformulations and various results published up to 10 years ago.\smallskip

More recently, using probabilistic approaches, upper bounds $t >0$ have been derived with the property that a rare element never occurs in more than $\frac{1}{2} + t$ sets of the system (cf. Gilmer \cite{Gilmer}, Chase and Lovett \cite{ZCL}, Yu \cite{Yu}, Cambie \cite{Cambie}, Alweiss \cite{Alweiss}, Sawin \cite{sawin}\}).\smallskip

In the following we introduce a class of inequalities for these set systems and prove their validity. This will lead to a proof of Frankl's conjecture.

\section{Notation}
Let $N = \{1,\ldots,n\}$ be a finite ground set. For an integer $i$,  $[i]$ denotes the set $\{1,\ldots, i\}$. We say that a set system $\cF \subseteq 2^N$ over $N$ is  \textit{intersection-closed} if for all $A,B \in \cF$ also $A\cap B \in \cF$. For $1\le i \le n$ let $\cF_i = \{ A\in \cF : i \in A\}$. We assume that $N$ is numbered such that $|\cF_1| \ge |\cF_2| \ge \ldots |\cF_n|$. We also assume that $\cF_1 \ne \cF$ and $\cF_i \ne \cF_j$ for $i\ne j$. Otherwise we may argue by induction on a smaller ground set. \smallskip


\section{Idea of the main Theorem}
Before we formally state the main Theorem, it may be helpful to illustrate its idea. We may think of it as a game against some adversary having an intersection-closed set system in mind. During the game the opponent reveals a growing portion of his set system to us. At each step and with this limited knowledge at hand we have to produce upper bounds on the $|\cF_i|$'s.

\smallskip
Since we assume that $\cF_1 \ne \cF$, the game starts with the information that $\varnothing \in \cF$ and we choose the conservative bound of $2^{n-1}$ for $|\cF_1|$ from a full Boolean algebra. At step $i$ we see all members of the set system with elements in $[i-1]$. Then, since the $|\cF_j|$'s are monotonically non-increasing, a possible response for us is to simply adopt the upper bound generated in the previous step. In some situations, however, we may find a certain set $A$ (a discarding set defined in the following section). Such a set has the property that certain supersets of $A$ cannot be contained in the yet unknown part $\cF$. We will observe that none of these sets has been excluded in an earlier step. This allows us to reduce the upper bound without violating its validity. \smallskip

\section{Preliminary Results}\label{Pre}

Given some $i$ with $1\le i\le n$ we call a set $A$  \textit{discarding at level $i$} if $A \subseteq [i-1], A \in \cF$ and $A \cup i\notin \cF$. The collection of all discarding sets at level $i$ is denoted by $\cD_i$.\smallskip

For a discarding set $A \in \cD_i$ we know that $A\cup i \notin \cF_i$. As mentioned before we can be sure that eventually more sets cannot occur in $\cF_i$. This is based on the fact that the collection of all supersets $A \cup i \cup X$ with $X \subseteq \{i+1,\ldots, n\}, A \cup i \cup X \in \cF$ has the Helly property:

\begin{lemma}\label{lem1}
	Let $\cF$ be intersection-closed, $A \in \cD_i$ for some $i$ with $1 \le i \le n$. Then all $X \subseteq \{i+1,\ldots n\}$ with $A \cup i \cup X \in \cF$ intersect in some $j$ with $j>i$.
\end{lemma}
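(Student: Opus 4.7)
The plan is to argue by contradiction using the intersection-closure of $\cF$ directly. Let $\cX = \{X \subseteq \{i+1,\ldots,n\} : A\cup\{i\}\cup X \in \cF\}$. If $\cX$ is empty, the statement holds vacuously (under the convention that the intersection of an empty family of subsets of $\{i+1,\ldots,n\}$ is $\{i+1,\ldots,n\}$ itself, which contains some $j>i$ whenever $i<n$; the case $i=n$ forces $\cX=\emptyset$ since the only candidate is $X=\emptyset$, and $A\cup\{n\}\notin\cF$ by the discarding property). So assume $\cX\neq\emptyset$.

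Next I would exploit the fact that $N$ is finite, so $\cX$ is a finite collection, and form
\[
S \;=\; \bigcap_{X\in\cX}\bigl(A \cup \{i\} \cup X\bigr).
\]
Since each summand lies in $\cF$ and $\cF$ is intersection-closed, finitely many applications of the defining property give $S\in\cF$. On the other hand, since $A$ and $\{i\}$ are contained in every term of the intersection, one can pull them out to get
\[
S \;=\; A \cup \{i\} \cup \bigcap_{X\in\cX} X.
\]

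Now comes the decisive step: if $\bigcap_{X\in\cX}X = \varnothing$, then $S = A\cup\{i\}$, which would put $A\cup\{i\}$ into $\cF$, contradicting the assumption $A\in\cD_i$. Hence $\bigcap_{X\in\cX} X$ is nonempty, and since each $X\subseteq\{i+1,\ldots,n\}$, any element $j$ of this common intersection satisfies $j>i$, as desired.

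The only real subtlety I anticipate is bookkeeping around degenerate cases (the empty $\cX$, and making sure that the pull-out $A\cup\{i\}\cup\bigcap X$ is correct when some $X$ is empty, which in fact immediately yields the contradiction $A\cup\{i\}\in\cF$). The core argument is a one-line Helly-style observation: an intersection-closed family cannot contain a collection of sets whose common intersection is a proper subset of a forbidden set lying below them all.
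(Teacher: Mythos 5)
Your proof is correct, and it actually completes what the paper's own proof only sketches. The paper's argument shows the \emph{pairwise} version: for any two $X,Y$ with $A\cup i\cup X,\, A\cup i\cup Y\in\cF$, having $X\cap Y=\varnothing$ would force $(A\cup i\cup X)\cap(A\cup i\cup Y)=A\cup i\in\cF$, contradicting $A\in\cD_i$. For an arbitrary set family, pairwise nonempty intersection does not yield a common element (e.g.\ $\{1,2\},\{2,3\},\{1,3\}$), so the paper's version is, as written, a step short of the stated conclusion that all such $X$ share a common $j>i$. What saves it is exactly what you use: the family $\{A\cup i\cup X : X\in\cX\}$ sits inside the intersection-closed $\cF$, so one may intersect \emph{all} of them at once to get $S=A\cup i\cup\bigcap_{X\in\cX}X\in\cF$, and $\bigcap_{X\in\cX}X=\varnothing$ would again give $A\cup i\in\cF$. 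This is the cleaner route and buys the full common-element conclusion directly, rather than relying on a pairwise Helly observation that needs the extra remark that $\cX$ itself is intersection-closed to upgrade. Your handling of the degenerate cases ($\cX=\varnothing$, $i=n$, some $X=\varnothing$) is careful and consistent with the paper's later use of ``has a root'' versus ``no root.''
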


\begin{proof}
	Let $X, Y \subseteq \{i+1,\ldots, n\}$ be such that $A \cup i \cup X, A \cup i \cup Y \in \cF$. Suppose $X\cap Y = \varnothing$. Then  $(A \cup i \cup X) \cap (A \cup i \cup Y) =  A \cup i \in\cF$ contradicting that $A$ is a discarding set at level $i$.
\end{proof}

Let the above element $j$ be the \textit{root} of the discarding set $A$ and $\cH^A_i$  be the collection of sets the set $A \in \cD_i$ excludes from  $\cF_i$. So 

$$\cH^A_i = \begin{cases}
	A \cup i \cup X ~\text{for all}~ X \subseteq \{i+1,\ldots n\} &\text{if $A$ has no root}\\
	A \cup i \cup X ~\text{for all}~ X \subseteq \{i+1,\ldots n\}, j \notin X &\text{if $A$ has root $j$}
\end{cases}$$


\begin{lemma}\label{lem2}
	Let $\cF$ be intersection-closed , $A \in \cD_i$ for some $i$ with $1 \le i \le n-1$. Then
	$|\cH^A_i| \ge 2^{n-i}$ if $A$ has no root and $|\cH^A_i| \ge 2^{n-(i+1)}$ if $A$ has a root.
\end{lemma}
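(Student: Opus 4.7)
The plan is to observe that Lemma~\ref{lem2} is essentially a counting statement once the definition of $\cH^A_i$ is unpacked, and to establish the advertised bounds by direct enumeration of the listed sets. First I would argue that the map $X \mapsto A \cup \{i\} \cup X$, viewed on subsets of $\{i+1, \ldots, n\}$, is injective: since $A \subseteq [i-1]$ and $\{i\}$ are both disjoint from $\{i+1, \ldots, n\}$, the set $X$ can be recovered as $(A \cup \{i\} \cup X) \cap \{i+1, \ldots, n\}$. Hence the cardinality of $\cH^A_i$ equals the number of admissible choices of $X$ appearing in the case distinction defining it.

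In the no-root case, the definition allows every $X \subseteq \{i+1, \ldots, n\}$, and since this ground set has $n - i$ elements, the number of such $X$'s is $2^{n-i}$, giving $|\cH^A_i| = 2^{n-i}$. In the case that $A$ has a root $j$, the admissible $X$'s are those with $j \notin X$, i.e.\ arbitrary subsets of the $(n-i-1)$-element set $\{i+1, \ldots, n\} \setminus \{j\}$, of which there are $2^{n-(i+1)}$. In both cases the bounds are in fact attained with equality; the weaker $\ge$ in the statement is presumably included because that is what is required for the intended later application. No real obstacle arises here — the only mild point to verify is the injectivity of $X \mapsto A \cup \{i\} \cup X$, which follows immediately from the pairwise disjointness of the index ranges $[i-1]$, $\{i\}$, and $\{i+1,\ldots,n\}$.
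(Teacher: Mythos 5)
Your proof is correct and takes essentially the same route as the paper: unpack the case distinction in the definition of $\cH^A_i$ and count the admissible $X\subseteq\{i+1,\ldots,n\}$, giving $2^{n-i}$ in the no-root case and $2^{n-(i+1)}$ in the root case. Your explicit remark that $X\mapsto A\cup\{i\}\cup X$ is injective (because $[i-1]$, $\{i\}$, $\{i+1,\ldots,n\}$ are pairwise disjoint) is a small added justification that the paper leaves implicit, but the argument is otherwise identical.
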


\begin{proof}
	If no superset of $A\cup i$ is in $\cF$ we may exclude all sets $A \cup i \cup X$ with  $X \subseteq \{i+1,\ldots n\}$, i.e. $|\cH^A_i| = 2^{n-i}$. Otherwise we exclude the same sets except those containing $j$, i.e.  $|\cH^A_i| = 2^{n-i} - 2^{n-(i+1)} = 2^{n-(i+1)}$  and the claim follows.
\end{proof}

The next Lemma is crucial to do the proper accounting in the proof of our main Theorem. It basically states that the elements we exclude from $\cF$ by Lemma \ref{lem2} will never be excluded more than once.

\begin{lemma}\label{lem3}
	Let  $A$  be a discarding set at level $i$ and $B$, a discarding set at level $j$ with  $A \ne B$ and $1 \le j\le i \le n$. Then 
	\begin{align}
		\cH^A_i \cap \cH^B_j &= \varnothing.\label{ineq3}
	\end{align}
\end{lemma}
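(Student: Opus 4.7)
The plan is to argue by contradiction: I would assume some $S$ lies in $\cH^A_i \cap \cH^B_j$ and derive either $A = B$ or a violation of the discarding/root structure of $B$. The key trick is that both $\cH^A_i$ and $\cH^B_j$ describe $S$ very rigidly on the initial segment $[i-1]$, so intersecting $S$ with $[i-1]$ reveals $A$ on one hand and an expression involving $B$ on the other.

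First, from the definition of $\cH^A_i$ I can write $S = A \cup \{i\} \cup X$ with $X \subseteq \{i+1,\ldots,n\}$, and since $A \subseteq [i-1]$ this forces $S \cap [i-1] = A$. Analogously $S = B \cup \{j\} \cup Y$ with $Y \subseteq \{j+1,\ldots,n\}$. In the easy case $j = i$ the same intersection gives $S \cap [i-1] = B$, so $A = B$, contradicting the hypothesis.

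When $j < i$, intersecting the $B$-representation with $[i-1]$ instead yields $A = B \cup \{j\} \cup C$, where $C := Y \cap \{j+1,\ldots,i-1\}$; so $B \cup \{j\} \subseteq A \in \cF$. I would next observe that the inclusion is strict: the alternative $A = B \cup \{j\}$ would place $B \cup \{j\}$ in $\cF$, contradicting $B \in \cD_j$. Hence $C$ is a non-empty subset of $\{j+1,\ldots,i-1\}$.

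The final step is the root argument. Because $A$ is an $\cF$-superset of $B \cup \{j\}$, the family $\{Y' \subseteq \{j+1,\ldots,n\} : B \cup \{j\} \cup Y' \in \cF\}$ is non-empty, so by Lemma \ref{lem1} (applied to $B$ at level $j$) it has a common element $r_B > j$, i.e.\ $B$ has a root $r_B$. Since $A$ itself is such a superset with tail $C$, we get $r_B \in C \subseteq Y$; but the rooted branch of the definition of $\cH^B_j$ requires $r_B \notin Y$, giving the contradiction. The only subtlety I anticipate is making clear that the existence of $A$ as an $\cF$-superset of $B \cup \{j\}$ is exactly what puts us in the rooted branch of the definition of $\cH^B_j$, so no separate case for an unrooted $B$ is needed.
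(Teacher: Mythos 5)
Your proof is correct and takes essentially the same route as the paper's: decompose the common set $S$, equate the $[i-1]$-parts to obtain $B \cup \{j\} \subseteq A$, set $C = A \setminus (B \cup \{j\})$ (the paper writes $A'$), and then invoke Lemma~\ref{lem1} and the root to produce an element forced into $Y$ that the rooted branch of $\cH^B_j$ forbids. The only (minor, cosmetic) difference is that the paper handles the no-root possibility for $B$ as a separate subcase leading to contradiction, while you note up front that the existence of $A \in \cF$ as a superset of $B \cup \{j\}$ already forces $B$ to have a root, so that subcase simply cannot arise; this is a slight streamlining but not a genuinely different argument.
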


\begin{proof}
	Suppose the claim is false. Then there exists sets $X \subseteq \{i+1,\ldots,n\}$ and $Y \subseteq \{j+1,\ldots,n\}$ such that $A\cup i \cup X = B \cup j \cup Y$. 
	
	If 	 $i=j$ then, since $A,B \subseteq \{1,\ldots, i-1\}$ and $X,Y \subseteq \{i+1,\ldots n\}$, we have $X=Y$ and $A=B$, a contradiction.
	
	\smallskip
	So let $i>j$. Then $B\cup j \subseteq A$. For $A' = A \setminus (B \cup j)$ we have $B \cup j \cup A' \cup i \cup X = B \cup j \cup Y$. Hence $A' \subseteq Y \subseteq \{j+1,\ldots n\}$. Now, if $B$ has no root then $B \cup j \cup A' \notin \cF$, contradicting $B \cup j \cup A' = A \in \cF$. Otherwise let $k>j$ be the root of $B$. Then, since  $B \cup j \cup A'  = A \in \cF$, $A'$ must contain $k$. So $k \in A' \subseteq Y$ implying $B\cup j \cup Y \notin H^B_j$, a contradiction.
\end{proof}	

\section{The main Theorem}
We now proceed to our main Theorem.

\begin{theorem}\label{thm1}
	Let $N=\{1,\ldots,n\}$ be a finite ground set, $\cF \subseteq 2^N$ an intersection-closed set system with $|\cF_1| \ge |\cF_2| \ge \ldots |\cF_n|$ and $t^0:= 2^{n-1}$. Then for every $1\le i \le n$ with $t^i := t^{i-1} -  \sum\nolimits_{A\in \cD_i} |\cH_i^A|$ the following statements hold:
	\begin{align}
		&& t^i &\ge |\cF_i|\label{ineq4}	\\
		&& 2^{n-i} \cdot |\cF \setminus (\cF_{i} \cup \ldots \cup \cF_n)| &\ge t^{i-1}.\label{ineq5}
	\end{align}
\end{theorem}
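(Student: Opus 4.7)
I will prove \eqref{ineq4} and \eqref{ineq5} simultaneously by induction on $i$, with \eqref{ineq5} at each level serving as a bridge between $|\cG_i|$ and $t^{i-1}$, and \eqref{ineq4} relying on the disjointness provided by Lemma~\ref{lem3}. Throughout I write $\cG_i = \cF\setminus(\cF_i\cup\cdots\cup\cF_n) = \{A\in\cF:A\subseteq[i-1]\}$.

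For the base case $i=1$, the hypothesis $\cF_1\ne\cF$ combined with the ordering $|\cF_1|\ge\cdots\ge|\cF_n|$ forces $\cF_j\ne\cF$ for every $j$; picking $C_j\in\cF\setminus\cF_j$ for each $j$ and intersecting yields $\varnothing\in\cF$, so $|\cG_1|\ge 1$ and \eqref{ineq5} at $i=1$ holds with equality. For \eqref{ineq4} one case-splits on $\cD_1\subseteq\{\varnothing\}$: if $\{1\}\in\cF$ then $\cD_1=\varnothing$ and $t^1=2^{n-1}\ge|\cF_1|$ trivially; otherwise $\cD_1=\{\varnothing\}$, and either $\varnothing$ has no root (forcing $\cF_1=\varnothing$ and $t^1=0$) or $\varnothing$ has a root $j$, so every $B\in\cF_1$ contains both $1$ and $j$ and $|\cF_1|\le 2^{n-2}=t^1$.

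In the inductive step, \eqref{ineq5} at $i+1$ is the cleaner direction. The map $A\mapsto A\cup\{i\}$ sends $\cG_i\setminus\cD_i$ injectively into $\cG_{i+1}\setminus\cG_i$: for $A\in\cG_i\setminus\cD_i$ the condition $A\notin\cD_i$ means $A\cup\{i\}\in\cF$, and $A\cup\{i\}\subseteq[i]$, so $A\cup\{i\}\in\cG_{i+1}$; moreover $i\in A\cup\{i\}$ places the image in $\cG_{i+1}\setminus\cG_i$. Together with $\cG_i\subseteq\cG_{i+1}$ this yields $|\cG_{i+1}|\ge 2|\cG_i|-|\cD_i|$. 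Combining with Lemma~\ref{lem2} (which gives $\sum_{A\in\cD_i}|\cH^A_i|\ge 2^{n-i-1}|\cD_i|$) and the inductive hypothesis $t^{i-1}\le 2^{n-i}|\cG_i|$ we obtain
\[
t^i = t^{i-1}-\sum_{A\in\cD_i}|\cH^A_i| \le 2^{n-i}|\cG_i|-2^{n-i-1}|\cD_i| = 2^{n-i-1}\bigl(2|\cG_i|-|\cD_i|\bigr) \le 2^{n-i-1}|\cG_{i+1}|,
\]
which is \eqref{ineq5} at $i+1$.

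For \eqref{ineq4} at $i+1$, the target is $|\cF_{i+1}|+\sum_{A\in\cD_{i+1}}|\cH^A_{i+1}|\le t^i$. Lemma~\ref{lem3}, extended by the elementary observation that when the same set $A$ appears in both $\cD_i$ and $\cD_j$ with $j<i$ the sets $\cH^A_i$ and $\cH^A_j$ remain disjoint (elements of the latter contain $j$ while those of the former do not), shows that $\cF_{i+1}$ together with the entire family $\{\cH^A_j\}_{j,A}$ is pairwise disjoint inside $\{B\subseteq N:i+1\in B\}$. To sharpen the trivial bound $2^{n-1}$ into the needed $t^i$, I would project $\cF_{i+1}$ onto the power set of $[i]$ via $B\mapsto B\cap[i]$, match each projection with an element of $\cG_{i+1}$ using the Helly-type structure of Lemma~\ref{lem1}, and then invoke \eqref{ineq5} at $i+1$ to translate the resulting cardinality bound into one involving $t^i$. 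The principal obstacle lies precisely in this matching step: the projection image need not be contained set-theoretically in $\cG_{i+1}$, so the accounting must carefully separate contributions from $\cD_{i+1}$ from genuine members of $\cF_{i+1}$, and combine the pairwise disjointness across all levels with the inductive bound \eqref{ineq5} at $i+1$ in the correct bookkeeping order.
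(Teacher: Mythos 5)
Your treatment of \eqref{ineq5} is correct and, if anything, cleaner than the paper's own. Writing $\cG_i=\cF\setminus(\cF_i\cup\cdots\cup\cF_n)$, the injection $A\mapsto A\cup\{i\}$ from $\cG_i\setminus\cD_i$ into $\cG_{i+1}\setminus\cG_i$ gives $|\cG_{i+1}|\ge 2|\cG_i|-|\cD_i|$ in one line; the paper derives the same estimate through a two-case split and an auxiliary family $\cD'_i$, so you have streamlined rather than departed from its argument. Your base case for both inequalities is also correct (the paper merely labels the $i=1$ case of \eqref{ineq5} as trivial, while you actually verify $\varnothing\in\cF$), and your remark that Lemma~\ref{lem3} should be supplemented by the case $A=B$, $i\ne j$ (disjointness then following because the sets in $\cH^A_j$ contain $j$ while those in $\cH^A_i$ do not) is a legitimate and worthwhile observation that the paper glosses over.

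Your proof of \eqref{ineq4}, however, has a genuine gap which you yourself flag. You propose to project $\cF_{i+1}$ onto $2^{[i]}$, match the projections with members of $\cG_{i+1}$, and feed the result back through \eqref{ineq5}, but you acknowledge that this matching is the ``principal obstacle'' and you supply no argument that closes it; as written, nothing in your proposal establishes $|\cF_{i+1}|+\sum_{A\in\cD_{i+1}}|\cH^A_{i+1}|\le t^{i}$. The paper's proof of \eqref{ineq4} does not invoke \eqref{ineq5} and does not project: it starts from the crude bound $|\cF_i|\le|\cF_{i-1}|\le t^{i-1}$ supplied by the ordering, observes that each $\cH^A_i$ with $A\in\cD_i$ consists of sets excluded from $\cF$ (hence from $\cF_i$), and uses Lemma~\ref{lem3} to conclude that these $\cH^A_i$ are pairwise disjoint and also disjoint from all $\cH^B_j$ counted at earlier levels, so that $\sum_{A\in\cD_i}|\cH^A_i|$ may be subtracted without double-counting. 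That global-disjointness accounting, rather than any projection-and-matching scheme, is the engine behind \eqref{ineq4}, and it is exactly the piece your proposal is missing.
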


\begin{proof}
(\ref{ineq4})~  Obviously, $t^0 = 2^{n-1}$ is an upper bound on $|\cF_1|$. If $\cD_1 \ne \varnothing$ we know from Lemma \ref{lem1} that the $\cH_1$'s cannot appear in  $\cF_1$. Moreover, by Lemma \ref{lem3} the sets in the $\cH_1$'s are mutually disjoint. Hence $t^0 - \sum\nolimits_{A\in \cD_1} |\cH_1^A|$ is still a valid upper bound on $|\cF_1|$.

\smallskip
Suppose the claim holds for $1,\ldots, i$. Since $|\cF_{i-1}| \ge |\cF_i|$ setting $t^i = t^{i-1}$ is an upper bound on $|\cF_i|$. As before, for $\cD_i \ne \varnothing$ the $\cH_i$'s cannot appear in $\cF_i$. We know from Lemma \ref{lem3} that the sets in the $\cH_i$'s are mutually disjoint and are disjoint from the $\cH_j$'s in earlier steps. Hence may reduce the upper bound by $\sum\nolimits_{A\in \cD_i} |\cH_i^A|$.

\smallskip
(\ref{ineq5})~ Recall that we assume  $\cF \ne \cF_1$. So inequality (\ref{ineq5}) trivially holds for $i=1$. 
Suppose the claim holds for $1,\ldots, i$. In the left-hand side of (\ref{ineq5}) we may rewrite  
$|\cF \setminus (\cF_{i+1} \cup \ldots \cup \cF_n)| = \big[|\cF \setminus (\cF_i \cup \ldots \cup \cF_n)| + |\cF_i \setminus (\cF_{i+1} \cup \ldots \cup \cF_n)|\big].$ We distinguish two cases.

\smallskip
If $|\cF_i \setminus (\cF_{i+1} \cup \ldots \cup \cF_n)| \ge |\cF \setminus (\cF_i \cup \ldots \cup \cF_n)|$ then
\begin{align*}
	2^{n-(i+1)} \cdot |\cF \setminus (\cF_{i+1} \cup \ldots \cup \cF_n)| &=
	2^{n-(i+1)} \cdot \big[|\cF \setminus (\cF_i \cup \ldots \cup \cF_n)| + |\cF_i \setminus (\cF_{i+1} \cup \ldots \cup \cF_n|\big]\\
	&\ge 2^{n-(i+1)} \cdot 2\cdot |\cF \setminus (\cF_i \cup \ldots \cup \cF_n)|\\
	& \ge  t^{i-1}\qquad\qquad\qquad\text{(by induction)}\\
	& \ge  t^{i-1} - \sum\nolimits_{A\in \cD_i} |\cH_i^A|\\
	&= t^i,
\end{align*}

If $|\cF_i \setminus (\cF_{i+1} \cup \ldots \cup \cF_n)| < |\cF \setminus (\cF_i \cup \ldots \cup \cF_n)|$ there are sets $A \in \cF \setminus (\cF_i \cup \ldots \cup \cF_n)$ such that $A \cup i  \notin \cF$, i.e. $A \in \cD_i$. Let $\cD' \subseteq \cD$ such that $|\cF_i \setminus (\cF_{i+1} \cup \ldots \cup \cF_n)| + |\cD'| = |\cF \setminus (\cF_i \cup \ldots \cup \cF_n)|$. 

Then 
\begin{align*}
	2^{n-(i+1)} \cdot |\cF \setminus (\cF_{i+1} \cup \ldots \cup \cF_n)| &=
	2^{n-(i+1)} \cdot \big[|\cF \setminus (\cF_i \cup \ldots \cup \cF_n)| + |\cF_i \setminus (\cF_{i+1} \cup \ldots \cup \cF_n|\big]\\
	&\ge 2^{n-(i+1)} \cdot \big[2\cdot |\cF \setminus (\cF_i \cup \ldots \cup \cF_n)| - |\cD'_i|\big]\\
	& \ge  t^{i-1} - 2^{n-(i+1)} \cdot |\cD'_i|\qquad\text{(by induction)}\\
	& \ge  t^{i-1} - 2^{n-(i+1)} \cdot |\cD_i|\qquad(\text{since}~ \cD'_i \subseteq \cD_i)\\
	& \ge  t^{i-1} - \sum\nolimits_{A\in \cD_i} |\cH_i^A|\qquad(\text{by Lemma \ref{lem2}})\\
	&=t^i,
\end{align*}
which finishes the proof.

\end{proof}

\section{Frankl's Conjecture}
Theorem \ref{thm1} leads to a slightly stronger version of Frankl's conjecture:

\begin{corollary}[\textbf{Frankl's conjecture}]
	Let $\cF$ be an intersection-closed set system over a finite ground set $N$ with $\cF \ne \{N\}$ and $|\cF_1| \ge |\cF_2| \ge \ldots |\cF_n|$. Then  $$|\cF| \ge |\cF_{n-1}| + |\cF_n|.$$
\end{corollary}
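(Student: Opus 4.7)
The plan is to derive the corollary as an almost immediate consequence of Theorem~\ref{thm1}, once one checks that its hypotheses apply. First I would handle the reductions described in Section~2. If some element of $N$ lies in every member of $\cF$, then we may delete it from the ground set; this leaves $|\cF|$, $|\cF_{n-1}|$ and $|\cF_n|$ unchanged (after re-indexing) and reduces $n$ by one, so by induction on $n$ we may assume $\cF_1\ne\cF$. Under this assumption, the intersection of all members of $\cF$ (which is in $\cF$ by intersection-closedness) contains no element of $N$, so $\varnothing\in\cF$ and $\varnothing\notin\cF_i$ for any $i$; this validates the base case $i=1$ of inequality~(\ref{ineq5}) that the theorem requires.

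Granted Theorem~\ref{thm1}, the deduction is a two-line combination. Applying inequality~(\ref{ineq4}) at the level $i=n-1$ gives
\[
  t^{n-1} \;\ge\; |\cF_{n-1}|.
\]
Applying inequality~(\ref{ineq5}) at the level $i=n$ gives
\[
  |\cF| - |\cF_n| \;=\; 2^{\,n-n}\cdot\bigl|\cF\setminus \cF_n\bigr| \;\ge\; t^{n-1},
\]
since the union $\cF_n\cup\cdots\cup\cF_n$ collapses to $\cF_n$ and $2^{n-n}=1$. Chaining the two inequalities yields $|\cF|-|\cF_n|\ge t^{n-1}\ge|\cF_{n-1}|$, which rearranges to the desired bound
\[
  |\cF| \;\ge\; |\cF_{n-1}|+|\cF_n|.
\]

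To see that this implies Frankl's conjecture in its usual form, note that by the ordering $|\cF_{n-1}|\ge|\cF_n|$ one gets $|\cF|\ge 2|\cF_n|$, so the element $n$ (which by construction appears in the fewest members of $\cF$) is contained in at most half of them. There is essentially no obstacle in the deduction itself; the entire content sits in Theorem~\ref{thm1}, and the only care needed is to pair inequality~(\ref{ineq4}) at level $n-1$ with inequality~(\ref{ineq5}) at level $n$, so that the common quantity $t^{n-1}$ links the upper bound on $|\cF_{n-1}|$ to the lower bound on $|\cF\setminus \cF_n|$.
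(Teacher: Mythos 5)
Your deduction is correct and is essentially the same as the paper's: the paper also chains $|\cF\setminus\cF_n|\ge t^{n-1}$ (inequality~(\ref{ineq5}) at $i=n$) with $t^{n-1}\ge|\cF_{n-1}|$ (inequality~(\ref{ineq4}) at $i=n-1$). Your preliminary reduction to the case $\cF_1\ne\cF$ merely makes explicit the standing assumption the paper places in Section~2.
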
	
\begin{proof}
	By Theorem \ref{thm1} we have $|\cF \setminus \cF_n| \ge t^{n-1} \ge  |\cF_{n-1}|$ and the claim follows.
\end{proof}	

A stronger version of Frankl's conjecture states that the rare element occurs in exactly half the number of sets of $\cF$ if and only if $\cF$ is isomorphic to a Boolean algebra. Before we can show this we need the following Lemma.

\begin{lemma}\label{lem5}
	Let $\cF$ be an intersection-closed set system over a finite ground set $N$ with $\cF \ne \{N\}$ and  $|\cF_1| \ge |\cF_2| \ge \ldots |\cF_n|$. Then $|\cF \setminus (\cF_i \cup \ldots \cup \cF_n)| - |\cD_i|= |\cF_i \setminus (\cF_{i+1} \cup \ldots \cup \cF_n)|$ implies
	\begin{equation}
		A \in \cF \Leftrightarrow   A \cup i \in \cF ~\text{for all}~ A \subseteq [i-1]\label{equ2}
	\end{equation}
\end{lemma}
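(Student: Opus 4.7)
The plan is a direct cardinality argument. Set
\[
\mathcal{X} := \{A \subseteq [i-1] : A \in \cF\}, \qquad
\mathcal{Y} := \{B \subseteq [i-1] : B \cup i \in \cF\}.
\]
Reading off the definitions: $|\cF \setminus (\cF_i \cup \ldots \cup \cF_n)| = |\mathcal{X}|$ (these are the sets of $\cF$ avoiding $\{i,\ldots,n\}$), the defining conditions of a discarding set at level $i$ give $|\cD_i| = |\mathcal{X}\setminus\mathcal{Y}|$, and partitioning $\cF_i$ by its intersection with $\{i+1,\ldots,n\}$ yields $|\cF_i \setminus (\cF_{i+1}\cup\ldots\cup\cF_n)| = |\mathcal{Y}|$. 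The hypothesis therefore rewrites as $|\mathcal{X}\cap\mathcal{Y}| = |\mathcal{Y}|$.

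The $\Leftarrow$ half of (\ref{equ2}) is then immediate: $\mathcal{X}\cap\mathcal{Y}\subseteq\mathcal{Y}$ together with equality of finite cardinalities forces $\mathcal{X}\cap\mathcal{Y}=\mathcal{Y}$, i.e.\ $\mathcal{Y}\subseteq\mathcal{X}$. Unpacked, every $B\subseteq[i-1]$ with $B\cup i\in\cF$ already lies in $\cF$.

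The $\Rightarrow$ half requires $\mathcal{X}\subseteq\mathcal{Y}$, equivalently $\cD_i=\varnothing$. The plan is to proceed by contradiction: assume some $A\in\cD_i$. Then Lemma~\ref{lem2} contributes at least $2^{n-(i+1)}$ supersets of $A\cup i$ absent from $\cF$, and Lemma~\ref{lem3} keeps these absences disjoint across different members of $\cD_i$. Coupling this with the just-proved bijection $\mathcal{Y} \leftrightarrow \{B\cup i : B\in\mathcal{Y}\}\subseteq\cF_i$, the ordering $|\cF_{i-1}|\ge|\cF_i|$, and intersection-closure (which lets us intersect supersets of $A\cup i$ with witnesses to $|\cF_{i-1}|$), one should reach a contradiction with the count of $|\cF_i|$ forced by Theorem~\ref{thm1}.

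The main obstacle is the $\Rightarrow$ direction. The $\Leftarrow$ half is essentially a one-line unpacking of cardinalities, whereas emptying $\cD_i$ is a structural claim that genuinely uses intersection-closure together with the inductive accounting built up in Theorem~\ref{thm1}, and is where real care is required.
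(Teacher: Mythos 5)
Your $\Leftarrow$ direction (deriving $\mathcal{Y}\subseteq\mathcal{X}$) is exactly the paper's argument. The identification of the three cardinalities — $|\cF\setminus(\cF_i\cup\ldots\cup\cF_n)|=|\mathcal{X}|$, $|\cD_i|=|\mathcal{X}\setminus\mathcal{Y}|$, $|\cF_i\setminus(\cF_{i+1}\cup\ldots\cup\cF_n)|=|\mathcal{Y}|$ — is what the paper's chain of equalities does, and the conclusion $|\mathcal{X}\cap\mathcal{Y}|=|\mathcal{Y}|$ forcing $\mathcal{Y}\subseteq\mathcal{X}$ is precisely where the paper's proof terminates. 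That half of your proposal is correct and complete.

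The $\Rightarrow$ direction is the genuine gap, and you are right to flag it, but your sketch does not close it. The hypothesis of Lemma~\ref{lem5} is algebraically equivalent to $\mathcal{Y}\subseteq\mathcal{X}$ and nothing more; it gives no leverage whatsoever on $\cD_i=\varnothing$ (i.e.\ $\mathcal{X}\subseteq\mathcal{Y}$). Your proposed contradiction argument invoking Lemmas~\ref{lem2}, \ref{lem3} and Theorem~\ref{thm1} names the right surrounding machinery but never actually produces a contradiction from the stated hypotheses, and it cannot: those hypotheses alone admit $\cD_i\ne\varnothing$. In fact the paper's own proof has the identical gap — after the cardinality chain it simply declares ``Hence (\ref{equ2}) holds,'' although only the $\Leftarrow$ implication was established. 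Whatever forces $\cD_i=\varnothing$ must come from the specific context in which Lemma~\ref{lem5} is invoked in the subsequent corollary (the equality cases in the backward induction), not from the lemma's hypothesis; as stated, neither your proposal nor the paper supplies a proof of the $\Rightarrow$ direction.
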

\begin{proof}
	If $|\cF \setminus (\cF_i \cup \ldots \cup \cF_n)| - |\cD_i|= |\cF_i \setminus (\cF_{i+1} \cup \ldots \cup \cF_n)|$ holds then 
	\begin{align*}
		|\{A  : A \subseteq [i-1], A\cup i \in \cF\}| &= |\cF_i \setminus (\cF_{i+1} \cup \ldots \cup \cF_n)|\\
		&=|\cF \setminus (\cF_i \cup \ldots \cup \cF_n)| - |\cD_i|\\
		&= |\{A \in \cF : A \subseteq [i-1]\}|\\
		&\quad - |\{A \in \cF : A \subseteq [i-1], A\cup i \notin \cF\}|\\
		&=|\{A \in \cF : A \subseteq [i-1], A\cup i \in \cF\}|.
	\end{align*}
	Hence (\ref{equ2}) holds. 
\end{proof}

We are now prepared for
\begin{corollary}
	Let $\cF$ be an intersection-closed set system over a finite ground set $N$ with $\cF \ne \{N\}$ and .  $|\cF_1| \ge |\cF_2| \ge \ldots |\cF_n|$. Then $2\cdot |\cF_n| = |\cF|$ if and only if $\cF$ is a Boolean algebra.
\end{corollary}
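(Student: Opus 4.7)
The backward direction is immediate: if $\cF = 2^N$ then $|\cF|=2^n$ and $|\cF_i|=2^{n-1}$ for every $i$, so in particular $2|\cF_n|=|\cF|$.

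For the forward direction, assume $2|\cF_n|=|\cF|$, equivalently $|\cF\setminus\cF_n|=|\cF_n|$. Abbreviate $a_i := |\cF\setminus(\cF_i\cup\ldots\cup\cF_n)|$ and $b_i := |\cF_i\setminus(\cF_{i+1}\cup\ldots\cup\cF_n)|$. Combining inequalities (\ref{ineq4}) and (\ref{ineq5}) of Theorem~\ref{thm1} with the ordering $|\cF_{n-1}|\ge|\cF_n|$ produces the squeeze
\[
|\cF_n|\;=\;a_n\;\ge\;t^{n-1}\;\ge\;|\cF_{n-1}|\;\ge\;|\cF_n|,
\]
so all four quantities coincide; in particular (\ref{ineq5}) is tight at level $n$.

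The central step is to propagate this tightness downward. I will argue by descending induction that, for each $1\le i<n$, tightness of (\ref{ineq5}) at level $i+1$ (i.e.\ $2^{n-(i+1)}a_{i+1}=t^i$) implies both tightness of (\ref{ineq5}) at level $i$ and the hypothesis $a_i-|\cD_i|=b_i$ of Lemma~\ref{lem5} at level $i$. This is done by revisiting the two cases in the proof of Theorem~\ref{thm1}. In the case $b_i\ge a_i$, tightness of the chain forces $b_i=a_i$ together with $\cD_i=\varnothing$, so $a_i-|\cD_i|=b_i$ holds trivially. In the case $b_i<a_i$, tightness forces $\cD'=\cD_i$ for the auxiliary subset chosen in the proof, and by construction of $\cD'$ this is exactly $a_i-b_i=|\cD_i|$. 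The boundary level $i=n$ requires a separate argument: by (\ref{ineq4}) and the already established $|\cF_n|=t^{n-1}$ one has $t^n\ge t^{n-1}$, while $t^n\le t^{n-1}$ by definition; since each $|\cH_n^A|\ge 1$ by Lemma~\ref{lem2}, this forces $\cD_n=\varnothing$, and combined with $a_n=b_n$ it supplies Lemma~\ref{lem5}'s hypothesis at $i=n$ as well.

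Once Lemma~\ref{lem5} holds at every level $i=1,\ldots,n$, we have $A\in\cF\Leftrightarrow A\cup i\in\cF$ for all $A\subseteq[i-1]$. Applied with $i=\max S$ this gives $S\in\cF\Leftrightarrow S\setminus\{\max S\}\in\cF$ for every nonempty $S\subseteq N$; iterating yields $S\in\cF\Leftrightarrow\varnothing\in\cF$. Since $\cF$ is nonempty, $\varnothing\in\cF$, and therefore $\cF=2^N$, a Boolean algebra. The main obstacle is the tightness bookkeeping, specifically verifying that the two cases of Theorem~\ref{thm1}'s proof both produce the \emph{same} consequence (the Lemma~\ref{lem5} hypothesis at level $i$); once this is settled, the rest is a mechanical peeling argument.
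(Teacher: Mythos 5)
Your proposal is correct and follows essentially the same route as the paper: establish tightness at level $n$ from $2|\cF_n|=|\cF|$ via the squeeze $|\cF_n|=|\cF\setminus\cF_n|\ge t^{n-1}\ge|\cF_n|$, then descend by induction through the two cases of Theorem~\ref{thm1}'s proof, extracting equality throughout the chain to obtain the hypothesis of Lemma~\ref{lem5} (and hence \eqref{equ2}) at each level. Your final "peeling" step via $\max S$ and $\varnothing\in\cF$ is simply a more explicit rendering of the paper's closing observation that $\cF$ is built by stepwise adjoining elements.
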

\begin{proof}
	Clearly, if $\cF$ is a Boolean algebra then the claim follows. Conversely, assume $2\cdot |\cF_n| = |\cF|$. We perform backward induction for $k=n,\ldots,1$ and prove (\ref{equ2}) and
	\begin{align}
		2^{n-k} \cdot |\cF {\setminus} (\cF_k \cup \ldots \cup \cF_n)| = t^{k-1}.\label{equ3}
	\end{align}	
	
	For $k = n$ Theorem \ref{thm1} implies $ |\cF_n| = |\cF \setminus \cF_n|  \ge t^{n-1} \ge t^n \ge |\cF_n|$. in particular, $\cD_n= \varnothing$ and hence (\ref{equ2}) and (\ref{equ3}) hold.

	\smallskip
	Assume that they have been shown for $n,\ldots,n-k+1$. To see that they also hold for $n-k$ recall the two cases in the proof of Theorem \ref{thm1}. 
	If $|\cF_{n-k} {\setminus} (\cF_{n-k+1} \cup \ldots \cup \cF_n)| \ge |\cF {\setminus} (\cF_{n-k+1} \cup \ldots \cup \cF_n)|$ then
	\begin{align*}
		2^{k-1} |\cF {\setminus} (\cF_{n-k+1} \cup \ldots \cup \cF_n)| &= 2^{k-1} \big[|\cF {\setminus} (\cF_{n-k} \cup \ldots \cup \cF_n)| + |\cF_{n-k} {\setminus} (\cF_{n-k+1} \cup \ldots \cup \cF_n)|\big]\\
		&\ge 2^{k-1} \cdot 2\cdot |\cF {\setminus} (\cF_{n-k} \cup \ldots \cup \cF_n)|\\
		& \ge  t^{n-(k+1)}\\
		& \ge  t^{n-(k+1)} - \sum\nolimits_{A\in \cD_{n-k}} |\cH_{n-k}^A|\\
		&=t^{n-k}.
	\end{align*}
	
	Otherwise
	\begin{align*}
		2^{k-1} |\cF {\setminus} (\cF_{n-k+1} \cup \ldots \cup \cF_n)| &= 2^{k-1} \big[|\cF {\setminus} (\cF_{n-k} \cup \ldots \cup \cF_n)| + |\cF_{n-k} {\setminus} (\cF_{n-k+1} \cup \ldots \cup \cF_n)|\big]\\
		&\ge 2^{k-1} \big[2\cdot |\cF {\setminus} (\cF_{n-k} \cup \ldots \cup \cF_n)| - |\cD'_{n-k}|\big]\\
		& \ge  t^{n-(k+1)} - 2^{k-1} |\cD'_{n-k}|\\
		& \ge  t^{n-(k+1)} - 2^{k-1} |\cD_{n-k}|\\
		& \ge  t^{n-(k+1)} - \sum\nolimits_{A\in \cD_{n-k}} |\cH_{n-k}^A|\\
		&=t^{n-k}.
	\end{align*}
	
	Since by induction $2^{k-1} |\cF {\setminus} (\cF_{n-k+1} \cup \ldots \cup \cF_n)| =t^{n-k}$, we have in both cases equality throughout. In particular, (\ref{equ3}) holds in both cases. 
	
	\smallskip
	Moreover, in the first case $|\cF_{n-k} {\setminus} (\cF_{n-k+1} \cup \ldots \cup \cF_n)| = |\cF {\setminus} (\cF_{n-k+1} \cup \ldots \cup \cF_n)|$ and $\sum\nolimits_{A\in \cD_{n-k}} |\cH_{n-k}^A| = 0$ hold So $\cD_{n-k} = \varnothing$. In the second case we get $\cD'_{n-k} = \cD_{n-k}$ and $|\cF \setminus (\cF_i \cup \ldots \cup \cF_n)| - |\cD_i|= |\cF_i \setminus (\cF_{i+1} \cup \ldots \cup \cF_n)|$ holds. Hence in both cases (\ref{equ2}) follows from Lemma \ref{lem5}.\smallskip
	
	The above implies that $\cF$ is built by step wise adding the next element to all sets generated in the previous steps.  Hence $\cF$ is a Boolean algebra which finishes the proof.
\end{proof}

\bibliographystyle{abbrv}
\bibliography{literatur}
\end{document}